\documentclass{amsart}
\usepackage{moreverb,url}
\usepackage{amssymb,amsmath,amsfonts,mathrsfs}
\usepackage{bm}
\usepackage{float}
\usepackage{verbatim}
\usepackage[square,numbers]{natbib}
\usepackage{stmaryrd}
\usepackage{fancyhdr}
\usepackage{syntax,etoolbox}
\usepackage{graphicx}
\usepackage[dvipsnames]{xcolor}
\usepackage{hyperref}
\usepackage{subcaption}
\usepackage[font=scriptsize, labelfont=bf]{caption}
\hypersetup{colorlinks=true,allcolors=blue}
\usepackage[square,numbers]{natbib}
\usepackage[left=2.5cm, right=2.5cm, bottom=3cm]{geometry}
\usepackage{lineno}

\newtheorem{theorem}{Theorem}[section]

\newtheorem{innercustomgeneric}{\customgenericname}
\providecommand{\customgenericname}{}
\newcommand{\newcustomproblem}[2]{%
	\newenvironment{#1}[1]
	{%
		\renewcommand\customgenericname{#2}%
		\renewcommand\theinnercustomgeneric{##1}%
		\innercustomgeneric
	}
	{\endinnercustomgeneric}
}

\newcustomproblem{customprob}{Problem}

\newcommand*{\bqed}{\hfill\ensuremath{\blacksquare}}%

\begin{document}
	
	
	\title[Density of smooth inactive displacements]{On the feasible set of a membrane shell under confinement. Smooth inactive admissible displacements are dense}
	
	
	\author{Paolo Piersanti}
	\address{School of Science and Engineering, The Chinese University of Hong Kong (Shenzhen), 2001 Longxiang Blvd., Longgang District, Shenzhen, China}
	\email{ppiersanti@cuhk.edu.cn}

\begin{abstract}
	We establish a density property for the obstacle problem of a linearly elastic elliptic membrane shell required to remain confined in a prescribed half-space. For a bounded Lipschitz reference domain, and under the sole assumption that the reference configuration lies at a strictly positive distance from the confining plane, we prove that smooth admissible displacements for which the confinement constraint is inactive (satisfied with a strictly positive margin) are dense in the set of all admissible displacements. This removes the additional condition on the unit normal to the mid-surface required in previous work, and the approximation scheme is elementary and adaptable to other unilateral obstacle problems.
\end{abstract}

\maketitle

\section{Geometrical preliminaries} \label{Sec:1}
For details about the classical notions of differential geometry used throughout the paper see, e.g.~\cite{Ciarlet2000}.

Greek indices and exponents vary in the set $\{1,2\}$, while Latin indices or exponents, except when they are used for indexing sequences, vary in the set $\{1,2,3\}$. The summation convention with respect to repeated indices or exponents is systematically used in conjunction with these two rules. We denote by $\mathbb{E}^3$ the three-dimensional Euclidean space. The notation $\delta^j_i$ designates the Kronecker symbol.

Given an open subset $\Omega$ of $\mathbb{R}^d$, the notation $\mathcal{D}(\Omega)$ denotes the space of all functions that are infinitely differentiable over $\Omega$ and have compact supports in $\Omega$. Given a vector space $X$, the corresponding product space $X \times X \times X$ is denoted by $\bm{X}$. The abbreviations \emph{``a.a.''} and \emph{``a.e.''} mean \emph{almost all} and \emph{almost everywhere}, respectively.

Let $\omega$ be a Lipschitz domain in $\mathbb{R}^2$ (cf., e.g. Section~8.2 in~\cite{Ciarlet2025}), let $y = (y_\alpha)$ denote a generic point in $\overline{\omega}$. A mapping $\bm{\theta} \in \mathcal{C}^1(\overline{\omega}; \mathbb{E}^3)$ is an \emph{immersion} if the two vectors
$$
\bm{a}_\alpha(y) := \partial_\alpha \bm{\theta}(y)
$$
are linearly independent at each point $y \in \overline{\omega}$. Then the image $\bm{\theta}(\overline{\omega})$ of the set $\overline{\omega}$ under the mapping $\bm{\theta}$ is a \emph{surface in} $\mathbb{E}^3$, equipped with $y_1, y_2$ as its \emph{curvilinear coordinates}. Given any point $y\in \overline{\omega}$, the vectors $\bm{a}_\alpha(y)$ span the \emph{tangent plane} to the surface $\bm{\theta}(\overline{\omega})$ at the point $\bm{\theta}(y)$, the unit vector
$$
\bm{a}_3(y) := \frac{\bm{a}_1(y) \wedge \bm{a}_2(y)}{|\bm{a}_1(y) \wedge \bm{a}_2(y)|}
$$
is normal to $\bm{\theta}(\overline{\omega})$ at $\bm{\theta} (y)$, the three vectors $\bm{a}_i(y)$ form the \emph{covariant} basis at $\bm{\theta}(y)$, and the three vectors $\bm{a}^j(y)$ defined by the relations
$$
\bm{a}^j(y) \cdot \bm{a}_i(y) = \delta^j_i
$$
form the \emph{contravariant} basis at $\bm{\theta}(y)$; note that the vectors $\bm{a}^\beta (y)$ also span the tangent plane to $\bm{\theta}(\overline{\omega})$ at $\bm{\theta}(y)$ and that $\bm{a}^3(y) = \bm{a}_3(y)$. The special notation $\omega_1 \subset \subset \omega$ means that $\overline{\omega_1} \subset \omega$ and $\textup{dist}(\partial\omega,\partial\omega_1):=\min\{|x-y|;x \in \partial\omega \textup{ and } y \in \partial\omega_1\}>0$.

In this paper we consider a specific \emph{obstacle problem} for linearly elastic elliptic membrane shells (cf., e.g., Chapter~4 in~\cite{Ciarlet2000}), where the shell is subjected to a \emph{confinement condition}, expressing that any \emph{admissible displacement vector field} $\eta_i \bm{a}^i$ must be such that all the points of the corresponding deformed configuration remain in a prescribed \emph{half-space} of the form
$$
\mathbb{H}:=\{x\in\mathbb{E}^3;\boldsymbol{Ox}\cdot\bm{q}\ge 0\},
$$
where $\bm{q}$ is a \emph{unit-vector} which is given once and for all, and which is thus orthogonal to the plane associated with the half-space where the linearly elastic shell is required to remain confined. In other words, any admissible displacement field must satisfy
\begin{equation}
	\label{cc-original}
	\left(\bm{\theta}(y)+\eta_i(y)\bm{a}^i(y)\right)\cdot\bm{q}\ge 0,\quad\textup{ for a.a. }y\in\omega.
\end{equation}

We denote by $\bm{U}_M(\omega)$ the set:
\begin{equation*}
	\bm{U}_M(\omega):=\{\bm{\eta}=(\eta_i)\in H^1_0(\omega)\times H^1_0(\omega)\times L^2(\omega);(\bm{\theta}(y)+\eta_i(y)\bm{a}^i(y))\cdot\bm{q}\ge 0 \textup{ for a.a. }y\in\omega\}.
\end{equation*}

The variational problem governing the deformation of a linearly elastic elliptic membrane shell subjected to the confinement condition~\eqref{cc-original} takes the following form (viz.~\cite{CiaMarPie2018b,CiaMarPie2018, Pie-2021}).

\begin{customprob}{$\mathcal{P}_M(\omega)$}
	\label{problemLim}
	Find $\bm{\zeta} \in \bm{U}_M(\omega)$ that satisfies the following variational inequalities
	$$
	\int_{\omega} a^{\alpha \beta \sigma \tau} \gamma_{\sigma \tau}(\bm{\zeta}) \gamma_{\alpha \beta} (\bm{\eta} - \bm{\zeta}) \sqrt{a} \mathrm{d} y \ge \int_{\omega} p^i (\eta_i - \zeta_i) \sqrt{a} \mathrm{d} y,
	$$
	for all $\bm{\eta} = (\eta_i) \in \bm{U}_M(\omega)$.
	\bqed
\end{customprob}

In~\cite{CiaMarPie2018}, in order to justify the rigorous asymptotic analysis departing from the three-dimensional counterpart of the obstacle problem under consideration, based on the classical three-dimensional energy for linearly elastic bodies, we needed to resort to the ``density property'', asserting that the set $\bm{U}_M(\omega)\cap\bm{H}^1_0(\omega)$ is dense in $\bm{U}_M(\omega)$ with respect to the norm $\|\cdot\|_{H^1(\omega)\times H^1(\omega)\times L^2(\omega)}$. In~\cite{CiaMarPie2018}, it was shown that if $\min_{y\in\overline{\omega}}(\bm{\theta}(y)\cdot\bm{q})>0$ and $\min_{y\in\overline{\omega}}(\bm{a}^3(y)\cdot\bm{q})>0$ then the ``density property'' holds. The ``density property'' was exploited in~\cite{MeiPie2024,Pie-2021-2} for establishing the higher interior regularity of the solution of Problem~\ref{problemLim}.
In this paper, we establish the validity of the ``density property'' \emph{without} the assumption that $\min_{y\in\overline{\omega}}(\bm{a}^3(y)\cdot\bm{q})>0$.

\begin{theorem}
	Let $\omega$ be a bounded Lipschitz domain. Assume that the immersion $\bm{\theta}\in\mathcal{C}^3(\overline{\omega};\mathbb{E}^3)$ satisfies:
	\begin{equation*}
		\min_{y\in\overline{\omega}}(\bm{\theta}(y)\cdot\bm{q})>0.
	\end{equation*}
	
	Then, it results that $(\bm{U}_M(\omega)\cap\bm{\mathcal{D}}(\omega))$ is dense in $\bm{U}_M(\omega)$ with respect to the norm $\|\cdot\|_{H^1(\omega)\times H^1(\omega)\times L^2(\omega)}$.
\end{theorem}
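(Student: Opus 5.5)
The plan is to build the approximation in successive steps, each of which preserves the confinement condition \eqref{cc-original}, and to create a strictly positive margin at the start so that the final mollification step can afford a small uniform error. Throughout I write $c_i:=\bm{a}^i\cdot\bm{q}\in\mathcal{C}^2(\overline{\omega})$ and $d:=\min_{\overline{\omega}}(\bm{\theta}\cdot\bm{q})>0$. The constraint then reads $\bm{\theta}\cdot\bm{q}+\eta_ic_i\ge 0$ a.e. It is \emph{affine} in $\bm{\eta}$, and $\bm{\eta}=\bm{0}$ satisfies it with margin $d$. Consequently $\bm{U}_M(\omega)$ is convex and star-shaped with respect to $\bm{0}$ in a \emph{pointwise} sense: if $\bm{\eta}\in\bm{U}_M(\omega)$ and $\lambda$ is any measurable function with values in $[0,1]$, then
$$
\bm{\theta}\cdot\bm{q}+\lambda\eta_ic_i=(1-\lambda)\,\bm{\theta}\cdot\bm{q}+\lambda(\bm{\theta}\cdot\bm{q}+\eta_ic_i)\ge(1-\lambda)d .
$$
I will use this observation repeatedly. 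The only thing to check at each step is that $\lambda\bm{\eta}$ still lies in $H^1_0\times H^1_0\times L^2$.

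\textbf{Step 1 (margin).} Take $\lambda=1-t$ constant, with $t\in(0,1)$. Then $(1-t)\bm{\eta}\to\bm{\eta}$ as $t\to0$, and $(1-t)\bm{\eta}$ satisfies the constraint with margin $td$.

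\textbf{Step 2 (truncation).} I truncate pointwise so that all three components become bounded. For $\eta_1,\eta_2$ I use $\lambda_M=\min\{1,M/N\}$ with $N=\max\{|\eta_1|,|\eta_2|\}\in H^1_0(\omega)$. Then $\lambda_M\eta_\alpha\in H^1_0(\omega)$ with $|\lambda_M\eta_\alpha|\le M$, and $\lambda_M\eta_\alpha\to\eta_\alpha$ in $H^1$ as $M\to\infty$ by the usual truncation argument. I multiply $\eta_3$ by the same $\lambda_M$, so that the constraint with margin is kept by the observation above.

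Bounding $\eta_3$ is the delicate part of this step. The multiplier cannot involve $\eta_3$, since $\eta_3$ has no derivatives and the product with $\eta_\alpha$ would leave $H^1$. Instead I modify $\eta_3$ alone. At a.e. $y$, the set of admissible values of $\eta_3(y)$, namely those giving a margin at least $td/2$ with the current $\eta_\alpha$, is a closed half-line, or all of $\mathbb{R}$ when $c_3(y)=0$ and the tangential part already suffices. I replace $\eta_3(y)$ by its projection onto the intersection of this half-line with $\{|\cdot|\le|\eta_3(y)|\}$, then clip at level $M$ wherever this is compatible with the half-line. The result is measurable, satisfies $|\eta_3^M|\le|\eta_3|$, converges to $\eta_3$ a.e., and hence converges in $L^2$ by dominated convergence. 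The point to check is that the remaining unbounded part lives where $|c_3|$ is small and $\bm{\theta}\cdot\bm{q}+\eta_\alpha c_\alpha$ is very negative. Since $|\eta_\alpha|\le M$, this is a set of measure tending to zero, and I expect a diagonal choice of parameters to handle it.

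\textbf{Step 3 (cut-off).} Let $\chi\in\mathcal{D}(\omega)$ with $0\le\chi\le1$ and $\chi=1$ outside a thin boundary strip. By the pointwise observation, $\chi\bm{\eta}$ is admissible, still with margin at least $td/2$ where $\chi=1$ and at least $d/2$-type elsewhere. Moreover $\chi\bm{\eta}\to\bm{\eta}$ in $H^1\times H^1\times L^2$ as the strip shrinks, by Hardy's inequality for $H^1_0$ functions on Lipschitz domains applied to the $\nabla\chi$ terms.

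\textbf{Step 4 (mollification), the main obstacle.} Set $\eta_i^\varepsilon=\rho_\varepsilon*(\chi\eta_i)\in\mathcal{D}(\omega)$. I compare
$$
\bm{\theta}\cdot\bm{q}+\eta^\varepsilon_ic_i=\rho_\varepsilon*\bigl(\bm{\theta}\cdot\bm{q}+\chi\eta_ic_i\bigr)+\bigl(\bm{\theta}\cdot\bm{q}-\rho_\varepsilon*(\bm{\theta}\cdot\bm{q})\bigr)+\bigl(c_i\,\rho_\varepsilon*(\chi\eta_i)-\rho_\varepsilon*(c_i\chi\eta_i)\bigr).
$$
The first term is $\ge td/2$, the second is $O(\varepsilon)$, and the commutator is bounded pointwise by $\mathrm{osc}_\varepsilon(c_i)\,\|\chi\eta_i\|_{L^\infty}$. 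This uniform commutator bound is exactly why Step 2 must make $\eta_3$ bounded, and the whole difficulty of dropping the old hypothesis $\min(\bm{a}^3\cdot\bm{q})>0$ is concentrated there. Once boundedness is secured, choosing $\varepsilon$ small relative to $t$ and $M$ gives the constraint with strictly positive margin. A diagonal choice of $t$, then $M$ and the strip, then $\varepsilon$, yields elements of $\bm{U}_M(\omega)\cap\bm{\mathcal{D}}(\omega)$ converging to $\bm{\eta}$.

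If the Step 2 bounding of $\eta_3$ fails as described, I would fall back on a Lusin-type argument. On a compact set of nearly full measure, $\eta_3$ is continuous and bounded; off it, I would replace $\eta_3$ by an admissible bounded value, using that the limit obtained is only required in $L^2$.
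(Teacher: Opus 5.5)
Your Step~2 has a gap, and it cannot be closed in the form you describe. If you keep the truncated $\eta_\alpha$ fixed and modify $\eta_3$ alone, then at a.a.\ $y$ with $c_3(y)\neq0$ and $\bm{\theta}(y)\cdot\bm{q}+\eta_\alpha(y)c_\alpha(y)<td/2$, every admissible value satisfies $|\eta_3(y)|\ge\bigl(td/2-\bm{\theta}(y)\cdot\bm{q}-\eta_\alpha(y)c_\alpha(y)\bigr)/|c_3(y)|$. Once $\min_{\overline{\omega}}(\bm{a}^3\cdot\bm{q})>0$ is dropped, $c_3$ may vanish along a curve in $\omega$. On that curve $(c_1,c_2)\neq(0,0)$, because $\bm{q}$ is tangent there, and such points can accumulate on it.

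For instance, take $\eta_\alpha=-h\,c_\alpha\sum_n u_n$. Here the $u_n$ are logarithmic capacitary profiles equal to $1$ on tiny discs approaching the curve, with disjoint supports and summable Dirichlet energies. Choose $h$ so large that $\bm{\theta}\cdot\bm{q}+\eta_\alpha c_\alpha\le-\max_{\overline{\omega}}(\bm{\theta}\cdot\bm{q})$ on those discs. An $\eta_3$ of size comparable to $1/|c_3|$ on the supports of the $u_n$ restores admissibility. It is square integrable if the supports shrink fast enough relative to their distance from the curve.

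For such a field and every sufficiently large $M$, every admissible third component compatible with these (bounded) $\eta_\alpha$ is unbounded. That the bad set has small measure is irrelevant for an $L^\infty$ bound, and the diagonal choice of parameters does not help. The Lusin fallback fails for the same reason: at those points there is no bounded admissible value to substitute. Since your Step~4 commutator bound uses $\|\chi\eta_3\|_{L^\infty(\omega)}$, the argument does not close.

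The missing idea, which is what the paper exploits in item~$(vi)$, is that no truncation is needed: in two dimensions the commutator is controlled by local $L^2$ norms. With $\rho_\varepsilon:=\varepsilon^{-2}\rho(\cdot/\varepsilon)$ and $\int\rho=1$ one has $\|\rho_\varepsilon\|_{L^2}=\varepsilon^{-1}\|\rho\|_{L^2}$. So for $f:=\chi\eta_i$ and $L$ a Lipschitz constant of $c_i$, the Cauchy--Schwarz inequality gives
\[
\bigl|c_i(y)(\rho_\varepsilon\ast f)(y)-(\rho_\varepsilon\ast(c_if))(y)\bigr|\le L\varepsilon\int_{B(0;\varepsilon)}|f(y-z)|\rho_\varepsilon(z)\,\mathrm{d}z\le L\|\rho\|_{L^2}\|f\|_{L^2(B(y;\varepsilon))}.
\]
Since $|B(y;\varepsilon)|=\pi\varepsilon^2$, the right-hand side tends to zero uniformly in $y$ by the absolute continuity of the Lebesgue integral. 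The factor $\varepsilon$ coming from the oscillation of $c_i$ exactly compensates the growth of $\|\rho_\varepsilon\|_{L^2}$.

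With this estimate you can delete Step~2 altogether. Your Steps~1, 3 and 4 then reproduce the paper's items~$(i)$, $(v)$ and $(vi)$. Your appeal to Hardy's inequality on Lipschitz domains replaces the boundary-layer estimate that the paper proves by hand in items~$(ii)$--$(iv)$.

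In Step~4 you should also either extend $\bm{\theta}$ beyond $\overline{\omega}$, as the paper does, or note that $\eta^\varepsilon_i(y)=0$ whenever $\textup{dist}(y,\textup{supp}\,\chi)\ge\varepsilon$. Then, once $2\varepsilon<\textup{dist}(\textup{supp}\,\chi,\partial\omega)$, your decomposition is only needed at points with $B(y;\varepsilon)\subset\omega$.
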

\begin{proof}
	We divide the proof into six parts, numbered $(i)$--$(vi)$.
	
	$(i)$ \emph{The set
		\begin{equation*}
			\begin{aligned}
				\bm{U}_M^{(1)}(\omega):=\{&\bm{\eta}\in H^1_0(\omega)\times H^1_0(\omega)\times L^2(\omega);\\
				&\quad\textup{ there exists }\delta=\delta(\bm{\theta},\bm{\eta})>0 \textup{ such that }(\bm{\theta}+\eta_i\bm{a}^i)\cdot\bm{q}\ge\delta \textup{ a.e. in }\omega\},
			\end{aligned}
		\end{equation*}
		is dense in $\bm{U}_M(\omega)$ with respect to the norm $\|\cdot\|_{H^1(\omega)\times H^1(\omega)\times L^2(\omega)}$}. Let $\bm{\eta}\in\bm{U}_M(\omega)$ be given.
	For each integer $k\ge 1$, define the sequence $\{\bm{\eta}_k\}_{k=1}^\infty$ as follows:
	\begin{equation*}
		\bm{\eta}_k:=\left(1-\dfrac{1}{k}\right)\bm{\eta}.
	\end{equation*}
	
	It is clear that $\bm{\eta}_k=(\eta_{i,k})\in H^1_0(\omega)\times H^1_0(\omega)\times L^2(\omega)$ for each integer $k\ge 1$ and that $\|\bm{\eta}_k-\bm{\eta}\|_{H^1(\omega)\times H^1(\omega)\times L^2(\omega)}=k^{-1}\|\bm\eta\|_{H^1(\omega)\times H^1(\omega)\times L^2(\omega)}\to0$ as $k\to\infty$. Additionally, a direct calculation gives:
	\begin{equation*}
		(\bm{\theta}(y)+\eta_{i,k}(y)\bm{a}^i(y))\cdot\bm{q}\ge\dfrac{1}{k}(\min_{y\in\overline{\omega}}(\bm{\theta}(y)\cdot\bm{q}))>0, \quad\textup{ for a.a. }y\in\omega.
	\end{equation*}
	
	The proof is complete by letting $\delta:=\frac{1}{k}(\min_{y\in\overline{\omega}}(\bm{\theta}(y)\cdot\bm{q}))$.
	
	$(ii)$ \emph{Let $f:\mathbb{R}\to\mathbb{R}$ be a Lipschitz\footnote{The Lipschitz constant is not needed for this item. It is exploited in the next one. Here the Lipschitz continuity of $f$ ensures that $S_\tau$ is a Lipschitz domain so that the trace of $v$ on the graph of $f$ is well-defined.} continuous function. For every $\tau>0$, let $S_{\tau}:=\{(x',s)\in\mathbb{R}^2:f(x')<s<f(x')+\tau\}$.
		Then, for every $v\in H^1(S_{\tau})$ whose trace on the graph $\Gamma:=\{(x',f(x')):x'\in\mathbb{R}\}$ vanishes, one has:
		\begin{equation}
			\label{eq:strip}
			\int_{S_{\tau}}|v|^2\mathrm{d}x'\mathrm{d}s\le\tau^2\int_{S_{\tau}}|\nabla v|^2\mathrm{d}x'\mathrm{d}s.
	\end{equation}}

	Fix $\tau>0$. Let $\Phi:\mathbb{R}^2\to\mathbb{R}^2$ be the shear $\Phi(x',s):=(x',s-f(x'))$, which is bi-Lipschitz since $f$ is Lipschitz, has Jacobian determinant equal to $1$, and maps $S_{\tau}$ onto the straight strip $\mathbb{R}\times(0,\tau)$. Set $w:=v\circ\Phi^{-1}\in H^1(\mathbb{R}\times(0,\tau))$. Then $w$ has trace zero on $\mathbb{R}\times\{0\}$, and by Fubini's theorem (cf., e.g., Theorem~1.15-6$(b)$ in~\cite{Ciarlet2025}) together with the characterisation of weak differentiability via absolute continuity along lines (cf., e.g., Theorem~11.45 in~\cite{Leoni2017}), for a.e. $x'\in\mathbb{R}$ the map $t\mapsto w(x',t)$ belongs to $H^1((0,\tau))$ with $w(x',0)=0$. Hence,
	\begin{equation*}
	w(x',t)=\int_0^t\partial_t w(x',r)\mathrm{d}r,
	\end{equation*}
	and H\"{o}lder's inequality gives:
	\begin{equation*}
	|w(x',t)|^2\le t\int_0^t|\partial_t w(x',r)|^2\mathrm{d}r\le\tau\int_0^\tau|\partial_t w(x',r)|^2\mathrm{d}r.
	\end{equation*}
	
	Integrating first in $t\in(0,\tau)$ and then in $x'\in\mathbb{R}$ yields:
	\begin{equation*}
	\int_{\mathbb{R}\times(0,\tau)}|w|^2\mathrm{d}x'\mathrm{d}t\le\tau^2\int_{\mathbb{R}\times(0,\tau)}|\partial_t w|^2\mathrm{d}x'\mathrm{d}t.
	\end{equation*}
	
	It remains to transfer this estimate back to $v$ on the curved strip $S_{\tau}$. To this end, note that $\Phi$ is such that $|\det J_\Phi|=1$. The change-of-variables formula gives
	\begin{equation}
		\label{eq:measure}
		\int_{\mathbb{R}\times(0,\tau)}|w|^2\mathrm{d}x'\mathrm{d}t
		=\int_{S_{\tau}}|w\circ\Phi|^2\mathrm{d}x'\mathrm{d}s
		=\int_{S_{\tau}}|v|^2\mathrm{d}x'\mathrm{d}s,
	\end{equation}
	since $w\circ\Phi=v$. Likewise, differentiating the identity $w(x',t)=v(x',t+f(x'))$ in the variable $t$ gives
	\begin{equation*}
	\partial_t w(x',t)=\partial_s v(x',t+f(x')),
	\end{equation*}
	that is, $\partial_t w=(\partial_s v)\circ\Phi^{-1}$. Applying the same change of variables~\eqref{eq:measure} to $|\partial_t w|^2$ gives:
	\begin{equation}
		\label{eq:measure2}
		\int_{\mathbb{R}\times(0,\tau)}|\partial_t w|^2\mathrm{d}x'\mathrm{d}t=\int_{S_{\tau}}|\partial_s v|^2\mathrm{d}x'\mathrm{d}s.
	\end{equation}
	
	Finally, since $\partial_s v$ is one of the two components of the gradient of $v$,
	\begin{equation*}
	|\partial_s v|\le|\nabla v|, \quad\textup{ a.e. in }S_{\tau},
	\end{equation*}
	so that
	\begin{equation}
		\label{eq:grad}
		\int_{S_{\tau}}|\partial_s v|^2\mathrm{d}x'\mathrm{d}s\le\int_{S_{\tau}}|\nabla v|^2\mathrm{d}x'\mathrm{d}s.
	\end{equation}
	
	Combining the inequality
	\begin{equation*}
	\int_{\mathbb{R}\times(0,\tau)}|w|^2\mathrm{d}x'\mathrm{d}t\le\tau^2\int_{\mathbb{R}\times(0,\tau)}|\partial_t w|^2\mathrm{d}x'\mathrm{d}t
	\end{equation*}
	with~\eqref{eq:measure}, \eqref{eq:measure2} and \eqref{eq:grad} yields~\eqref{eq:strip}, and the proof is complete.
	
	$(iii)$ \emph{Let $f:\mathbb{R}\to\mathbb{R}$ be a Lipschitz continuous function with Lipschitz constant $L>0$, and let $\Gamma:=\{(x',f(x')):x'\in\mathbb{R}\}$. Then, for every $p=(x',s)$ satisfying $g(p):=s-f(x')\ge0$, one has:
		\begin{equation}\label{eq:comp}
			\dfrac{g(p)}{2\max\{L,1\}}\le\textup{dist}(p,\Gamma)\le g(p).
	\end{equation}}
	
	For the upper bound, observe that the point $(x',f(x'))$ lies on $\Gamma$ and $|p-(x',f(x'))|=|s-f(x')|=g(p)$, for every $p=(x',s)$ satisfying $g(p)\ge 0$.
	
	For the lower bound, if $L=0$ the claim is immediate. Assume $L>0$ and fix $q=(x'',f(x''))\in\Gamma$. Then, it results:
	\begin{equation*}
	s-f(x'')=s-f(x')+f(x')-f(x'')\ge g(p)-L|x' -x''|.
	\end{equation*}
	
	Observe, on the one hand, that if $|x' -x''|\ge\frac{g(p)}{2L}$ then:
	\begin{equation*}
		|p-q|\ge|x' -x''|\ge\frac{g(p)}{2L}\ge\frac{g(p)}{2\max\{L,1\}}.
	\end{equation*}
	
	On the other hand, if $|x' -x''|<\frac{g(p)}{2L}$ then $s-f(x'')\ge g(p)-L|x' -x''|>\frac{g(p)}2>0$, so that it results:
	\begin{equation*}
		|p-q|\ge s-f(x'')>\frac{g(p)}2\ge\frac{g(p)}{2\max\{L,1\}}.
	\end{equation*}
	
	Since, in both cases, it results $|p-q|\ge\frac{g(p)}{2\max\{L,1\}}$, taking the infimum over $q\in\Gamma$ gives the lower bound in \eqref{eq:comp} and the proof is complete.
	
	$(iv)$ \emph{There exist constants $\delta_0>0$, $K\ge1$ and $C>0$ such that, for every $0<\delta\le\delta_0$ and every $u\in H^1_0(\omega)$, it results
		\begin{equation}\label{eq:layer}
			\int_{\omega^{\delta}}|u|^2\mathrm{d}y\le C\delta^2\int_{\omega^{K\delta}}|\nabla u|^2\mathrm{d}y,
		\end{equation}
		where $\omega^{\delta}:=\{y\in\omega;\textup{dist}(y,\partial\omega)<\delta\}$.}
	Since $\partial\omega$ is Lipschitz, there exist finitely many open sets $U_1,\dots,U_N$ covering $\partial\omega$. For each integer $1\le j\le N$, there exist an isometry $R_j:\mathbb{R}^2\to\mathbb{R}^2$, a bounded interval $I_j\subset\mathbb{R}$, numbers $a_j>0$ and $\varepsilon_j>0$, and a Lipschitz continuous function $f_j:\mathbb{R}\to\mathbb{R}$ (obtained by extending the local boundary chart to all of $\mathbb{R}$) with Lipschitz constant $L_j$ such that, in the coordinates $(x',s):=R_j^{-1}(y)$, it results:
	\begin{equation*}
	\begin{aligned}
		R_j^{-1}(U_j)&=\{(x',s);\ x'\in I_j\textup{ and } f_j(x')-\varepsilon_j<s<f_j(x')+a_j\},\\
		\omega\cap U_j&=R_j\left(\{(x',s);x'\in I_j\textup{ and }f_j(x')<s<f_j(x')+a_j\}\right),\\
		\partial\omega\cap U_j&=R_j\left(\{(x',f_j(x'));x'\in I_j\}\right).
	\end{aligned}
	\end{equation*}
	
	For each integer $1\le j\le N$, and for $0<\tau\le a_j$, define the local strip
	\begin{equation*}
	\Sigma_j(\tau):=R_j\left(\{(x',s);x'\in I_j,f_j(x')<s<f_j(x')+\tau\}\right),
	\end{equation*}
	and observe that, by the upper bound in~\eqref{eq:comp} and the fact that $R_j$ is an isometry, it results that $\Sigma_j(\tau)\subset\omega^{\tau}$ for every $0<\tau\le a_j$.
	
	Since $\partial\omega$ is compact and the sets $U_j$ are open, there exists $\lambda>0$ such that, for every $z\in\partial\omega$, the ball $B(z,\lambda)$ is contained in $U_j$ for some integer $1\le j\le N$. Define the number
	\begin{equation*}
		K:=2\left(\max_{1\le j\le N}\max\{L_j,1\}\right)\ge1,
	\end{equation*}
	and choose $\delta_0>0$ such that $\delta_0<\lambda$ and $K\delta_0\le\min_{1\le j\le N}a_j$.
	
	Fix $0<\delta\le\delta_0$ and fix $y\in\omega^{\delta}$. Choose $z\in\partial\omega$ such that $|y-z|=\textup{dist}(y,\partial\omega)<\delta$. Since $\delta<\lambda$, there exists an integer $1\le j\le N$ such that $y,z\in U_j$. Writing $z=R_j(x'_0,f_j(x'_0))$ and $y=R_j(x',s)$ with $x'\in I_j$ and $f_j(x')<s<f_j(x')+a_j$, an application of the lower bound in~\eqref{eq:comp} to the graph $\Gamma_j:=\{(x',f_j(x'));x'\in\mathbb{R}\}$ gives
	\begin{equation*}
	\frac{s-f_j(x')}{K}\le\frac{s-f_j(x')}{2\max\{L_j,1\}}\le\textup{dist}((x',s),\Gamma_j)\le\big|(x',s)-(x'_0,f_j(x'_0))\big|=|y-z|<\delta,
	\end{equation*}
	so that $s-f_j(x')<K\delta\le a_j$ or, equivalently, $y\in\Sigma_j(K\delta)$. Therefore, we have shown that:
	\begin{equation}
		\label{eq:cover}
		\omega^{\delta}\subset\bigcup_{j=1}^{N}\Sigma_j(K\delta).
	\end{equation}
	
	Let us now move on to show~\eqref{eq:layer}. Fix $u\in H^1_0(\omega)$. An application of~\eqref{eq:cover} gives:
	\begin{equation*}
	\int_{\omega^{\delta}}|u|^2\mathrm{d}y\le\sum_{j=1}^{N}\int_{\Sigma_j(K\delta)}|u|^2\mathrm{d}y.
	\end{equation*}
	
	Noticing that the proof is unchanged when $x'$ ranges over the interval $I_j$, we now apply~\eqref{eq:strip} in each local chart with $\tau=K\delta$ and $v:=u\circ R_j$.
	Observe that if the trace of $u$ vanishes on $\partial\omega$, then the trace of $v$ vanishes on the graph $\{(x',f_j(x'));x'\in I_j\}$ as well. In conclusion, we obtain that an application of~\eqref{eq:strip} in each local chart gives:
	\begin{equation}
		\label{chart:1}
		\int_{\Sigma_j(K\delta)}|u|^2\mathrm{d}y\le(K\delta)^2\int_{\Sigma_j(K\delta)}|\nabla u|^2\mathrm{d}y.
	\end{equation}
	
	Having chosen $K\delta\le a_j$, the upper bound in~\eqref{eq:comp} holds and we infer that $\Sigma_j(K\delta)\subset\omega^{K\delta}$ for all integers $1\le j\le N$.
	In light of this, summing~\eqref{chart:1} over $j=1,\dots,N$ gives
	\begin{equation*}
	\int_{\omega^{\delta}}|u|^2\mathrm{d}y\le(K\delta)^2\sum_{j=1}^{N}\int_{\Sigma_j(K\delta)}|\nabla u|^2\mathrm{d}y\le NK^2\delta^2\int_{\omega^{K\delta}}|\nabla u|^2\mathrm{d}y,
	\end{equation*}
	and the claim follows with $C:=NK^2$.
	
	$(v)$ \emph{The set
		\begin{equation*}
			\bm{U}_M^{(2)}(\omega):=\{\bm{\eta}\in \bm{U}_M^{(1)}(\omega);\bm{\eta}=\bm{0} \textup{ a.e. in a neighbourhood of }\partial\omega\},
		\end{equation*}
		is dense in $\bm{U}_M^{(1)}(\omega)$ with respect to the norm $\|\cdot\|_{H^1(\omega)\times H^1(\omega)\times L^2(\omega)}$}. Fix $\bm{\eta}\in\bm{U}_M^{(1)}(\omega)$ and let $\delta=\delta(\bm{\theta},\bm{\eta})>0$ be such that:
		\begin{equation*}
		(\bm{\theta}(y)+\eta_i(y)\bm{a}^i(y))\cdot\bm{q}\ge\delta,\quad\textup{ for a.a. }y\in\omega.
		\end{equation*}
		
		Let $d(y):=\textup{dist}(y,\partial\omega)$ and let $\psi:\mathbb{R}\to[0,1]$ be the function defined by:
		\begin{equation*}
		\psi(t):=
		\begin{cases}
		0,& t\le\frac12,\\
		2t-1,& \frac12<t<1,\\
		1,& t\ge1.
		\end{cases}
		\end{equation*}
		
		Observe that $\psi$ is Lipschitz continuous with $|\psi'|\le 2$ a.e. in $\mathbb{R}$. For each integer $k\ge1$ with $k^{-1}\le\delta_0$, define the function:
		\begin{equation*}
		\xi_k(y):=\psi(kd(y)),\qquad\textup{ for all }y\in\overline{\omega}.
		\end{equation*}
		
		Since $d$ is Lipschitz continuous with Lipschitz constant equal to $1$, and $\psi$ is Lipschitz continuous with Lipschitz constant equal to $2$, it results $\xi_k\in W^{1,\infty}(\omega)$. Moreover, for a.a. $y\in\omega$, it can be easily verified that $\xi_k$ satisfies the following properties
		\begin{equation}
			\label{xi:prop}
		\begin{aligned}
			0\le\xi_k\le 1&,\quad\textup{ in }\overline{\omega},\\
			\xi_k=1&,\quad\textup{ on }\{y\in\omega;d(y)\ge k^{-1}\},\\
			\xi_k=0&,\quad\textup{ on }\{y\in\omega;d(y)\le(2k)^{-1}\},\\
			|\nabla\xi_k|\le 2k&,\quad\textup{ a.e. in }\omega,\\
			\nabla\xi_k\neq 0&\quad\textup{ only on the open annulus }\{y\in\omega;(2k)^{-1}<d(y)<k^{-1}\}\subset\omega^{1/k},\\
			\textup{supp }\nabla\xi_k&=\{y\in\omega;(2k)^{-1}\le d(y)\le k^{-1}\}\subset\subset\omega,
		\end{aligned}
		\end{equation}
		from which it results that $\xi_k\in H^1_0(\omega)\cap W^{1,\infty}(\omega)$. Define the vector field:
		\begin{equation*}
		\bm{\eta}_k(y):=\xi_k(y)\bm{\eta}(y),\qquad\textup{ for a.a. }y\in\omega.
		\end{equation*}
		
		Since $\xi_k\in W^{1,\infty}(\omega)$ by~\eqref{xi:prop} and $\eta_{\beta}\in H^1_0(\omega)$, the characterisation of weak differentiability via absolute continuity along lines (cf., e.g., Theorem~11.45 in~\cite{Leoni2017}) gives $\eta_{\beta,k}:=\xi_k\eta_{\beta}\in H^1(\omega)$ and, moreover, the classical product formula holds. 
		
		Since, by~\eqref{xi:prop}, it results $\textup{supp }(\xi_k\eta_{\beta})\subset\subset\omega$, we obtain that $\eta_{\beta,k}\in H^1_0(\omega)$. Since $\eta_{3,k}:=\xi_k\eta_3\in L^2(\omega)$, it results $\bm{\eta}_k\in H^1_0(\omega)\times H^1_0(\omega)\times L^2(\omega)$. Let us now verify that $\bm{\eta}_k=(\eta_{i,k})$ verifies the constraint a.e. in $\omega$. For a.a. $y\in\omega$, it results:
		\begin{equation}
			\label{eq:membership}
			\begin{aligned}
				&(\bm{\theta}(y)+\eta_{i,k}(y)\bm{a}^i(y))\cdot\bm{q}=(1-\xi_k(y))\bm{\theta}(y)\cdot\bm{q}+\xi_k(y)(\bm{\theta}(y)+\eta_i(y)\bm{a}^i(y))\cdot\bm{q}\\
				&\ge(1-\xi_k(y))\left(\min_{y\in\overline{\omega}}(\bm{\theta}(y)\cdot\bm{q})\right)+\xi_k(y)\delta\ge\min\left\{\min_{y\in\overline{\omega}}(\bm{\theta}(y)\cdot\bm{q}),\delta\right\}>0.
			\end{aligned}
		\end{equation}
		
		Combining the fact that $\bm{\eta}_k=\bm{0}$ a.e. in $\{y\in\omega;d(y)<(2k)^{-1}\}$, \eqref{eq:membership} and $(\bm{\theta}(y)+\eta_{i,k}(y)\bm{a}^i(y))\cdot\bm{q}\ge\tilde{\delta}:=\min\left\{\min_{y\in\overline{\omega}}(\bm{\theta}(y)\cdot\bm{q}),\delta\right\}$ gives $\bm{\eta}_k\in\bm{U}_M^{(2)}(\omega)$.
		
		Let us now show that $\|\eta_{i,k}-\eta_i\|_{L^2(\omega)}\to 0$ as $k\to\infty$, for each $1\le i\le 3$. Since, by~\eqref{xi:prop}, the function $(1-\xi_k)$ has support in $\omega^{1/k}$ and $0\le1-\xi_k\le1$, since $|\eta_i|^2\in L^1(\omega)$ and since $|\omega^{1/k}|\to0$ as $k\to\infty$, the absolute continuity of the Lebesgue integral gives
		\begin{equation}
			\label{eq:L2}
			\int_{\omega}|\eta_{i,k}-\eta_i|^2\mathrm{d}y=\int_{\omega}(1-\xi_k)^2|\eta_i|^2\mathrm{d}y\le\int_{\omega^{1/k}}|\eta_i|^2\mathrm{d}y\to0,
		\end{equation}
		as $k\to\infty$, showing that $\|\eta_{i,k}-\eta_i\|_{L^2(\omega)}\to 0$ as $k\to\infty$, for each $1\le i\le 3$.
		
		Let us now show that $\|\partial_\alpha\eta_{\beta,k}-\partial_\alpha\eta_\beta\|_{L^2(\omega)}\to 0$ as $k\to\infty$, for each $\alpha,\beta\in\{1,2\}$.
		An application of the classical product formula to $\eta_{\beta,k}\in H^1_0(\omega)$ gives
		\begin{equation*}
		\partial_{\alpha}\eta_{\beta,k}(y)-\partial_{\alpha}\eta_{\beta}(y)=(\partial_{\alpha}\xi_k(y))\eta_{\beta}(y)+(\xi_k(y)-1)\partial_{\alpha}\eta_{\beta}(y),\qquad\textup{ for a.a. }y\in\omega,
		\end{equation*}
		whence
		\begin{equation*}
		\begin{aligned}
		\int_{\omega}|\partial_{\alpha}\eta_{\beta,k}-\partial_{\alpha}\eta_{\beta}|^2\mathrm{d}y
		&\le2\int_{\omega}|\nabla\xi_k|^2|\eta_{\beta}|^2\mathrm{d}y
		+2\int_{\omega}(1-\xi_k)^2|\partial_{\alpha}\eta_{\beta}|^2\mathrm{d}y .
		\end{aligned}
		\end{equation*}
		
		For the first term, observe that~\eqref{xi:prop} gives $|\nabla\xi_k|\le 2k$ a.e. in $\omega$, and that $\nabla\xi_k$ vanishes outside $\omega^{1/k}$. An application of item $(iv)$ combined with the absolute continuity of the Lebesgue integral gives
		\begin{equation*}
		\int_{\omega}|\nabla\xi_k|^2|\eta_{\beta}|^2\mathrm{d}y\le 4 k^2\int_{\omega^{1/k}}|\eta_{\beta}|^2\mathrm{d}y\le 4C\int_{\omega^{K/k}}|\nabla\eta_{\beta}|^2\mathrm{d}y\to0,\quad\textup{ as }k\to\infty.
		\end{equation*}
		
		For the second term, since $(1-\xi_k)$ has support in $\omega^{1/k}$, it results:
		\begin{equation*}
		\int_{\omega}(1-\xi_k)^2|\partial_{\alpha}\eta_{\beta}|^2\mathrm{d}y\le\int_{\omega^{1/k}}|\partial_{\alpha}\eta_{\beta}|^2\mathrm{d}y\to0,\quad\textup{ as }k\to\infty.
		\end{equation*}
		
		Hence $\partial_{\alpha}\eta_{\beta,k}\to\partial_{\alpha}\eta_{\beta}$ in $L^2(\omega)$ as $k\to\infty$ for $\alpha,\beta\in\{1,2\}$. Combining the latter convergence with the convergence~\eqref{eq:L2} we obtain $\eta_{\beta,k}\to\eta_{\beta}$ in $H^1(\omega)$ as $k\to\infty$ and the proof is complete.
		
		$(vi)$ \emph{The set $\bm{U}_M^{(3)}(\omega):=\bm{\mathcal{D}}(\omega)\cap\bm{U}_M^{(2)}(\omega)$ is dense in $\bm{U}_M^{(2)}(\omega)$  with respect to the norm $\|\cdot\|_{H^1(\omega)\times H^1(\omega)\times L^2(\omega)}$.} Let $\bm{\eta}\in\bm{U}_M^{(2)}(\omega)$ be given and let $\delta>0$ be such that $(\bm{\theta}(y)+\eta_i(y)\bm{a}^i(y))\cdot\bm{q}\ge\delta$, for a.a. $y\in\omega$.
		
		Since the support of $\bm{\eta}$ is compact in $\omega$, we can extend $\bm{\eta}$ by $\bm{0}$ outside of $\omega$ and denote this extension by $\tilde{\bm{\eta}}$. Since $\bm{\theta}\in\mathcal{C}^3(\overline{\omega};\mathbb{E}^3)$, the Whitney extension theorem provides $\tilde{\bm{\theta}}\in\mathcal{C}^3(\mathbb{R}^2;\mathbb{E}^3)$ with $\tilde{\bm{\theta}}=\bm{\theta}$ on $\overline{\omega}$. As the immersion condition and the strict inequality $\tilde{\bm{\theta}}\cdot\bm{q}>0$ hold on the compact set $\overline{\omega}$, there is a bounded open set $\tilde{\omega}$ such that $\overline{\omega}\subset\subset\tilde{\omega}$, such that
		\begin{equation*}
		\min_{y\in\overline{\tilde{\omega}}}(\tilde{\bm{\theta}}(y)\cdot\bm{q})>0,
		\end{equation*}
		and such that $\tilde{\bm{\theta}}|_{\overline{\tilde{\omega}}}$ is still an immersion (cf., e.g., Lemma~6.3 in~\cite{MeiPie2024}). We denote by $\tilde{\bm{a}}^i$ the vectors of the contravariant basis for the immersion $\tilde{\bm{\theta}}$.
		
		Denoting by $\rho$ the \emph{original} mollifier defined on page~108 of~\cite{Brez11}, for each integer $k\ge 1$ such that $k^{-1}<\frac{1}{2}\min\left\{\textup{dist}(\partial\omega;\textup{supp }\tilde{\bm{\eta}}),\textup{dist}(\partial\omega;\partial\tilde{\omega})\right\}$, we define $\rho_{1/k}$ as follows:
		\begin{equation*}
			\rho_{1/k}(z):=\dfrac{k^2}{\|\rho\|_{L^1(B(0;1))}}\rho(kz),\quad\textup{ for all }z\in\mathbb{R}^2.
		\end{equation*}
		
		It is well-known (cf., e.g., \cite{Brez11}) that $\rho_{1/k}\ge 0$, that $\textup{supp } \rho_{1/k}$ is contained in $B(0;1/k)$ and that $\int_{\mathbb{R}^2}\rho_{1/k}(z)\mathrm{d} z=1$.
		Define the functions:
		\begin{equation*}
			\tilde{\eta}_{i,k}:=\rho_{1/k}\star\tilde{\eta}_i \in \mathcal{D}(\mathbb{R}^2).
		\end{equation*}
		
		It results that $\tilde{\eta}_{\beta,k}\to\tilde{\eta}_\beta$ in $H^1(\mathbb{R}^2)$ as $k\to\infty$, and that $\tilde{\eta}_{3,k}\to\tilde{\eta}_3$ in $L^2(\mathbb{R}^2)$ as $k\to\infty$. For $y\in\omega$, set $\eta_{i,k}(y):=\tilde{\eta}_{i,k}(y)$. For a.a. $y\in\omega$, it results
		\begin{equation}
			\label{est1}
			\begin{aligned}
				&\left(\bm{\theta}(y)+\eta_{i,k}(y)\bm{a}^i(y)\right)\cdot\bm{q}
				=\left(\bm{\theta}(y)+\left(\int_{\mathbb{R}^2}\rho_{1/k}(z)\tilde{\eta}_i(y-z)\mathrm{d} z\right)\bm{a}^i(y)\right)\cdot\bm{q}\\
				&=\int_{\mathbb{R}^2}\left(\left(\tilde{\bm{\theta}}(y)-\tilde{\bm{\theta}}(y-z)\right)\cdot\bm{q}\right)\rho_{1/k}(z)\mathrm{d} z+\int_{\mathbb{R}^2}\left((\tilde{\bm{\theta}}(y-z)+\tilde{\eta}_i(y-z)\tilde{\bm{a}}^i(y-z))\cdot\bm{q}\right)\rho_{1/k}(z)\mathrm{d} z\\
				&\quad+\int_{\mathbb{R}^2}\left(\left(\tilde{\bm{a}}^i(y)-\tilde{\bm{a}}^i(y-z)\right)\cdot\bm{q}\right)\tilde{\eta}_i(y-z)\rho_{1/k}(z)\mathrm{d} z\\
				&\ge-\max_{z\in B(0;1/k)}|\tilde{\bm{\theta}}(y)-\tilde{\bm{\theta}}(y-z)|+\int_{\mathbb{R}^2}\left((\tilde{\bm{\theta}}(y-z)+\tilde{\eta}_i(y-z)\tilde{\bm{a}}^i(y-z))\cdot\bm{q}\right)\rho_{1/k}(z)\mathrm{d} z\\
				&\quad+\int_{\mathbb{R}^2}\left(\left(\tilde{\bm{a}}^i(y)-\tilde{\bm{a}}^i(y-z)\right)\cdot\bm{q}\right)\tilde{\eta}_i(y-z)\rho_{1/k}(z)\mathrm{d} z\\
				&\ge-\max_{z\in B(0;1/k)}|\tilde{\bm{\theta}}(y)-\tilde{\bm{\theta}}(y-z)|+\tilde{\delta}+\int_{\mathbb{R}^2}\left(\left(\tilde{\bm{a}}^i(y)-\tilde{\bm{a}}^i(y-z)\right)\cdot\bm{q}\right)\tilde{\eta}_i(y-z)\rho_{1/k}(z)\mathrm{d} z,
			\end{aligned}
		\end{equation}
		where $\tilde{\delta}:=\min\{\delta,\min_{y\in\overline{\tilde{\omega}}}(\tilde{\bm{\theta}}(y)\cdot\bm{q})\}$.
		
		Let us estimate the last integral term in~\eqref{est1}. The fact that $\tilde{\bm{a}}^i\in\mathcal{C}^2(\overline{\tilde{\omega}};\mathbb{E}^3)$ gives that $\tilde{\bm{a}}^i$ is Lipschitz continuous. Therefore, we obtain that there exists a constant $L>0$ independent of $i$ and $k$ such that:
		\begin{equation}
			\label{est2}
			|\tilde{\bm{a}}^i(y-z)-\tilde{\bm{a}}^i(y)|\le \dfrac{L}{k},\quad\textup{ for all }y\in\overline{\omega}, \textup{ for all }z\in B(0;1/k) \textup{ and for all }i\in\{1,2,3\}.
		\end{equation}
		
		An application of~\eqref{est2} and the Cauchy--Schwarz inequality gives:
		\begin{equation}
			\label{est3}
			\begin{aligned}
				&\left|\int_{\mathbb{R}^2}\left(\left(\tilde{\bm{a}}^i(y)-\tilde{\bm{a}}^i(y-z)\right)\cdot\bm{q}\right)\tilde{\eta}_i(y-z)\rho_{1/k}(z)\mathrm{d} z\right|\\
				&\le\dfrac{L}{k}\sum_{i=1}^{3}\left(\int_{B(y;1/k)}|\tilde{\eta}_i(w)|^2\mathrm{d}w\right)^{1/2}\left(\int_{B(0;1/k)}|\rho_{1/k}(z)|^2\mathrm{d} z\right)^{1/2}.
			\end{aligned}
		\end{equation}
		
		In light of the change of variables
		\begin{equation*}
			\int_{B(0;1/k)}|\rho_{1/k}(z)|^2\mathrm{d} z=\dfrac{k^4}{\|\rho\|_{L^1(B(0;1))}^2}\int_{B(0;1)}|\rho(z')|^2 k^{-2}\mathrm{d} z'=\dfrac{k^2\|\rho\|_{L^2(B(0;1))}^2}{\|\rho\|_{L^1(B(0;1))}^2},
		\end{equation*}
		we can thus estimate each addend on the right-hand side of~\eqref{est3} as follows:
		\begin{equation}
			\label{est4}
			\dfrac{L}{k}\left(\int_{B(y;1/k)}|\tilde{\eta}_i(z)|^2\mathrm{d} z\right)^{1/2}\left(\int_{B(0;1/k)}|\rho_{1/k}(z)|^2\mathrm{d} z\right)^{1/2}
			\le L\left(\int_{B(y;1/k)}|\tilde{\eta}_i(z)|^2\mathrm{d} z\right)^{1/2}\dfrac{\|\rho\|_{L^2(B(0;1))}}{\|\rho\|_{L^1(B(0;1))}}.
		\end{equation}
		
		Since $\max_{y\in\overline{\omega}}|B(y;1/k)|=\pi k^{-2}\to 0$ as $k\to\infty$, the absolute continuity of the Lebesgue integral gives that:
		\begin{equation}
			\label{est4bis}
			\int_{B(y;1/k)}|\tilde{\eta}_i(w)|^2\mathrm{d}w \to 0,\quad\textup{ as }k\to\infty \textup{ independently of }y\in\omega.
		\end{equation}
		
		Thanks to~\eqref{est3} and~\eqref{est4}, the right-hand side of~\eqref{est1} can be estimated as follows:
		\begin{equation}
			\label{est5}
			\begin{aligned}
				&-\max_{z\in B(0;1/k)}|\tilde{\bm{\theta}}(y)-\tilde{\bm{\theta}}(y-z)|+\tilde{\delta}+\int_{\mathbb{R}^2}\left(\left(\tilde{\bm{a}}^i(y)-\tilde{\bm{a}}^i(y-z)\right)\cdot\bm{q}\right)\tilde{\eta}_i(y-z)\rho_{1/k}(z)\mathrm{d} z\\
				&\ge-\left(\max_{z\in B(0;1/k)}|\tilde{\bm{\theta}}(y)-\tilde{\bm{\theta}}(y-z)|\right)-L\sum_{i=1}^3\left(\int_{B(y;1/k)}|\tilde{\eta}_i(z)|^2\mathrm{d} z\right)^{1/2}\dfrac{\|\rho\|_{L^2(B(0;1))}}{\|\rho\|_{L^1(B(0;1))}}+\tilde{\delta}.
			\end{aligned}
		\end{equation}
		
		For $k$ sufficiently large, the uniform continuity of $\tilde{\bm{\theta}}$ on $\overline{\tilde{\omega}}$ and~\eqref{est4bis} imply that the two subtracted terms on the right-hand side of~\eqref{est5} tend to zero uniformly in $y\in\omega$. Hence, for a.a. $y\in\omega$,
		\begin{equation*}
			\left(\bm{\theta}(y)+\eta_{i,k}(y)\bm{a}^i(y)\right)\cdot\bm{q}\ge\dfrac{\tilde{\delta}}{2}>0.
		\end{equation*}
		
		The choice of $k$ we made beforehand ensures that $\textup{supp }\tilde{\eta}_{i,k}$ is compact in $\omega$, so that $\eta_{i,k}:=\tilde{\eta}_{i,k}|_{\omega}\in\mathcal{D}(\omega)$. Thus $\bm{\eta}_k:=(\eta_{i,k})\in\bm{\mathcal{D}}(\omega)\cap\bm{U}_M^{(2)}(\omega)=\bm{U}_M^{(3)}(\omega)$. Since $\tilde{\eta}_{\beta,k}\to\tilde{\eta}_\beta$ in $H^1(\mathbb{R}^2)$ as $k\to\infty$ and $\tilde{\eta}_{3,k}\to\tilde{\eta}_3$ in $L^2(\mathbb{R}^2)$ as $k\to\infty$, we obtain that $\bm{\eta}_k\to\bm{\eta}$ in $H^1(\omega)\times H^1(\omega)\times L^2(\omega)$ as $k\to\infty$, which proves that $\bm{U}_M^{(3)}(\omega)$ is dense in $\bm{U}_M^{(2)}(\omega)$.
		
		Combining $(i)$--$(vi)$, and noting that $\bm{U}_M^{(3)}(\omega)\subseteq\bm{U}_M(\omega)\cap\bm{\mathcal{D}}(\omega)\subseteq\bm{U}_M(\omega)$, the theorem follows.
\end{proof}

\bibliographystyle{abbrvnat} 
\bibliography{references.bib}	

@book{Ciarlet2000,
      author = "Ciarlet, P G",
       title = "Mathematical Elasticity. Vol. III: {T}heory of Shells",
   publisher = "North-Holland",
     address = "Amsterdam",
        year = "2000"
}

@article{CiaMarPie2018,
	author="Ciarlet, P. G and Mardare, C and Piersanti, P",
	title="An obstacle problem for elliptic membrane shells",
	journal="{M}ath. {M}ech. {S}olids",
	volume="24",
	number="5",
	year="2019",
	pages="1503--1529"
}

@book{Brez11,
	author="Brezis, H",
	title="Functional {A}nalysis, {S}obolev {S}paces and {P}artial {D}ifferential {E}quations",
	publisher="Springer",
	address="New York",
	year="2011"
}

@article{CiaMarPie2018b,
	author="Ciarlet, P G and Mardare, C and Piersanti, P",
	title="Un probl\`eme de confinement pour une coque membranaire lin\'eairement \'elastique de type elliptique",
	journal="C.R. Acad. Sci. Paris, S\'{e}r. I",
	volume="356",
	number="10",
	pages="1040--1051",
	year="2018"
}

@article{Pie-2021,
	AUTHOR = {Piersanti, Paolo},
	TITLE = {On the justification of the frictionless time-dependent {K}oiter's model for thermoelastic shells},
	JOURNAL = {J. Differential Equations},
	VOLUME = {296},
	YEAR = {2021},
	PAGES = {50--106}
}

@book{Ciarlet2025,
	author={Ciarlet, P. G.},
	title={Linear and Nonlinear Functional Analysis with Applications},
	publisher={Society for Industrial and Applied Mathematics},
	edition={Second},
	address={Philadelphia},
	year={2025}
}

@book{Leoni2017,
	AUTHOR = {Leoni, Giovanni},
	TITLE = {A first course in {S}obolev spaces},
	SERIES = {Graduate Studies in Mathematics},
	VOLUME = {181},
	EDITION = {Second},
	PUBLISHER = {American Mathematical Society, Providence, RI},
	YEAR = {2017},
	PAGES = {xxii+734}
}

@article{MeiPie2024,
	AUTHOR = {Meixner, Aaron and Piersanti, Paolo},
	TITLE = {Numerical approximation of the solution of an obstacle problem modelling the displacement of elliptic membrane shells via the penalty method},
	JOURNAL = {Appl. Math. Optim.},
	VOLUME = {89},
	YEAR = {2024},
	NUMBER = {2},
	PAGES = {Paper No. 45, 60},
}

@article{Pie-2021-2,
	AUTHOR = {Piersanti, Paolo},
	TITLE = {On the improved interior regularity of the solution of a second order elliptic boundary value problem modelling the displacement of a linearly elastic elliptic membrane shell subject to an obstacle},
	JOURNAL = {Discrete Contin. Dyn. Syst.},
	volume={42},
	number={2},
	YEAR = {2022},
	pages={1011--1037}
}

\end{document}